%
\documentclass[10pt]{amsart}
\usepackage{amsmath,amsthm,amssymb,amsfonts, eucal, amscd, mathbbol, mathrsfs} 
\usepackage{pdfsync}
\usepackage{url}
\usepackage{cite} 
\usepackage{setspace}
\usepackage[all,cmtip]{xy} 
\usepackage{graphicx}
\usepackage{subfig}  
\usepackage{fancyhdr}   
\usepackage{latexsym} 
\usepackage{epic}  
\usepackage{ifthen}
\usepackage[bookmarks,bookmarksnumbered, plainpages=false, pdfpagelabels]{hyperref}
\usepackage{rotating}
\usepackage{scalefnt} 
\raggedbottom 

 
\onehalfspacing
\parskip = 0.2in 
\parindent = 0.0in
\topmargin = 0.0in

\oddsidemargin = 0.0in
\evensidemargin = 0.0in
\textwidth = 6.0in

\newtheorem{thm}{Theorem}
\newtheorem{cor}[thm]{Corollary} 
\newtheorem{lem}[thm]{Lemma} 
\newtheorem{prop}[thm]{Proposition}
\theoremstyle{definition} 

\theoremstyle{definition} 

\theoremstyle{definition} 

\theoremstyle{definition}
\newtheorem{remarks}[thm]{Remarks}
\theoremstyle{definition}

\theoremstyle{definition} 

\theoremstyle{definition} 
\newtheorem{example}[thm]{Example}

\numberwithin{thm}{subsection}

\newcommand{\R}{\ensuremath{\mathbb{R}}}


\usepackage{eucal} 

\usepackage{mathrsfs} 


\def\i{\infty}
\def\a{\alpha}

\def\b{\beta}

\def\L{\Lambda} 
\def\o{\omega} 
 
\def\s{\sigma} 
\def\t{\tau}

\def\B{\mathcal{B}}
\def\hB{\hat{\mathcal{B}}}

\def\cal{\mathcal}

\def\rotateminus{\reflectbox{\rotatebox[origin=c]{155}{\hspace{.6pt}-}}}

\def\Xint#1{\mathchoice
{\XXint\displaystyle\textstyle{#1}}%
{\XXint\textstyle\scriptstyle{#1}}%
{\XXint\scriptstyle\scriptscriptstyle{#1}}%
{\XXint\scriptscriptstyle\scriptscriptstyle{#1}}%
\!\int}
\def\XXint#1#2#3{{\setbox0=\hbox{$#1{#2#3}{\int}$}
\vcenter{\hbox{$#2#3$}}\kern-.5\wd0}}

\def\cint{\Xint    \rotateminus }


\begin{document} 
 \author[J. Harrison \& H. Pugh]{J. Harrison \\Department of Mathematics \\University of California, Berkeley\\ \\ H. Pugh \\ Department of Pure Mathematics and Mathematical Statistics\\University of Cambridge} 
	\title[Topology of Chains]{ Topological Aspects of Differential Chains }   
  \today

\begin{abstract}
	In this paper we investigate the topological properties of the space of differential chains \(   \,^\prime {\cal B}(U)  \) defined on an open subset \( U \) of a Riemannian manifold \( M \). We show that  \(  \,^\prime {\cal B}(U)  \) is not generally reflexive, identifying a fundamental difference between currents and differential chains.  We also give several new brief (though non-constructive) definitions of the space $ \,^\prime {\cal B}(U) $, and prove that it   is a separable ultrabornological \( (DF) \)-space.
	
		  Differential chains are closed under dual versions of fundamental operators of the Cartan calculus on differential forms \cite{OC}  \cite{poincarelemma}.   The space has good properties some of which are not exhibited by currents \( {\cal B}'(U) \mbox{ or } {\cal D}'(U) \).   For example,  chains supported in finitely many points are dense in \(  \,^\prime {\cal B}(U) \) for all open \( U \subset M \), but not generally in the strong dual topology of \( {\cal B}'(U) \).
		
		 \end{abstract}				   

  \maketitle  
\section{Introduction} 
\label{sec:introduction} 
 
   We begin with a Riemannian manifold \( M \).  Let \( U \subset M \) be open and    \( {\cal P}_k = {\cal P}_k(U) \) the space of finitely supported sections of the $k$-th exterior power of the tangent bundle \( \L_k(TU) \).    Elements of \(  {\cal P}_k(U) \) are called \emph{pointed $k$-chains in \( U \)}.
Let \( {\cal F} = {\cal F}(U) \) be a complete locally convex  space   of differential forms defined on \( U \).     We find a   predual to  \( {\cal F} \), that is, a complete l.c.s.   \( \,^\prime{\cal F} \) such that \(( \,^\prime{\cal F})' =  {\cal F} \).  The predual \( \,^\prime{\cal F} \) is uniquely determined if we require that \( {\cal P}_k \) be dense in \(  \,^\prime{\cal F}  \), and that the topology on \( \,^\prime{\cal F} \) restricts to the Mackey topology on \( {\cal P}_k \),  the finest locally convex topology on \( {\cal P}_k \) such that  \(( \,^\prime{\cal F})' =  {\cal F} \).  Two natural questions arise:  
(i) Is \(  \,^\prime{\cal F} \) reflexive? 
(ii) Is there a constructive definition of the topology on \( \,^\prime{\cal F} \)?  

   Let \( {\cal E}_k \) be the space of \( C^\i \) $k$-forms defined on \( U \), and \( {\cal D}_k \)  the space of $k$-forms with compact support in \( U \).  Then \( {\cal D}_k \) is an \( (LF) \)-space, an inductive limit of Fr\'echet spaces.   The space \( {\cal D}_k' \)  is the celebrated space of Schwartz distributions for \( k = 0 \) and \( U = \R^n \). Let  \( {\cal B}_k^r \)  be the Fr\'echet space of $k$-forms whose Lie derivatives are bounded up to order \( r \), and  \( {\cal B}_k = \varprojlim_{r} {\cal B}_k^r \).  

  Most of this paper concerns the space \( \,^\prime{{\cal B}_k}  \) which is now well developed (\cite{harrison1, continuity, hodge,  harrison2, OC,   poincarelemma}).   First appearing in \cite{harrison1} is a constructive, geometric definition using ``difference chains,'' which does not rely on a space of differential forms.  (See also \cite{thesis}   for an elegant exposition.   An earlier approach based on polyhedral chains can be found in \cite{diffcomp}.) We use an equivalent definition below using differential forms \( {\cal B}_k \) and the space of pointed $k$-chains \( {\cal P}_k \) which has the advantage of of brevity.  

  We can write an element \( A \in {\cal P}_k(U) \) as a  formal sum \( A = \sum_{i=1}^s (p_i; \a_i) \) where \( p_i \in U \), and \( \a_i \in \L_k(T_p U) \).  Define a family of norms on \( {\cal P}_k \),  \[ \|A\|_{B^r} = \sup_{0 \ne \o \in {\cal B}_k^r} \frac{\cint_A \o}{\|\o\|_{C^r}} \]  for \( r \ge 0 \), where \( \cint_A \o := \sum_{i=1}^s \o(p_i; \a_i) \).   Let \( \hat{\cal{B}}_k^r \) denote the Banach space on completion, and \( \hat{\cal{B}}_k = \varinjlim_r \hat{\cal{B}}_k^r \) the inductive limit.   We endow \( \hat{\cal{B}}_k \) with the inductive limit topology \( \t \). Since the norms are decreasing, the Banach spaces form an increasing nested sequence.  	As of this writing, it is unknown whether  \(  \hat{\cal{B}}_k \) is complete.  Since \(  \hat{\cal{B}}_k \) is a locally convex space, we may take its completion (see Schaefer \cite{scheafer}, p. 17) in any case, and denote the resulting space by \(  \,^\prime{\cal B}_k(U) \). Elements of \(  \,^\prime{\cal{B}}_k(U) \) are called ``differential $k$-chains\footnote{previously known as ``\( k \)-chainlets''} in \( U \).''

The reader might ask   how  \( \,^\prime{\cal B}_k(U) \) relates to the space \(  {\cal{B}}_k'(U) \)  of currents, endowed with the strong dual topology.  We prove below that \( \,^\prime{\cal B}_k (U) \) is not generally reflexive. However, under the canonical inclusion \( u:  \,^\prime{\cal B}_k (U) \to {\cal B}_k'(U) \), this subspace of currents is closed under the primitive and fundamental operators used in the Cartan calculus (see  Harrison \cite{OC}).  Thus, differential chains form a distinguished subspace of currents that is constructively defined and approximable by pointed chains.  That is, while \( {\cal P}_k \) is dense in \(  {\cal{B}}_k' \) in the weak topology, \( {\cal P}_k \)  is in fact dense in \( \,^\prime{{\cal B}_k}  \) in the strong topology.  More specifically, when \( {\cal B}_k'(U) \) is given the strong topology,  the space  \(  u({\,^\prime {\cal B}}_k(U) ) \) equipped with the subspace topology is topologically isomorphic to \(  \,^\prime{\cal{B}}_k(U)  \).  Thus in the case of differential forms \( {\cal B}_k(\R^n) \) question (i) has a negative, and (ii) has an affirmative answer.

\section{Topological Properties} 
\label{sec:preliminaries}

\begin{prop}\label{laundry}
	 \( \hat{\cal{B}}_k \) is an ultrabornological, bornological, barreled, \( (DF) \), Mackey, Hausdorff, and locally convex space.   \( \,^\prime{\cal B}_k  \) is barreled, \( (DF) \), Mackey, Hausdorff and locally convex.
\end{prop}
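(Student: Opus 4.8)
The plan is to treat the two spaces separately, deriving every property of \(\hat{\cal{B}}_k\) from its structure as a countable inductive limit of Banach spaces (an \((LB)\)-space), and every property of \(\,^\prime{\cal B}_k\) from the single fact that it is the completion of \(\hat{\cal{B}}_k\). The organizing principle is that completion preserves the entire first list \emph{except} for ultrabornologicity and bornologicity, and this is precisely why those two properties are absent from the second assertion.

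First I would record that \(\hat{\cal{B}}_k = \varinjlim_r \hat{\cal{B}}_k^r\) is a countable inductive limit of Banach spaces with continuous linking maps, since the norms \(\|\cdot\|_{B^r}\) are decreasing and so the inclusions \(\hat{\cal{B}}_k^r \hookrightarrow \hat{\cal{B}}_k^{r+1}\) are norm-nonincreasing. Local convexity is then automatic as an inductive limit of locally convex spaces. For the Hausdorff property I would exhibit a continuous injection into a Hausdorff space: the pairing \(\cint_A \o\) defines a continuous linear map \(u\colon \hat{\cal{B}}_k \to {\cal B}_k'\) into the currents (say with the weak-\(*\) topology), and \(u\) is injective because the smooth forms in \({\cal B}_k\) separate points, which is built into the definition of the norms \(\|\cdot\|_{B^r}\). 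Since the inductive-limit topology is finer than the topology pulled back through \(u\), it is Hausdorff. With Hausdorffness in hand, \(\hat{\cal{B}}_k\) is ultrabornological by the standard characterization of ultrabornological spaces as the Hausdorff inductive limits of Banach spaces; ultrabornological gives bornological and barreled, and bornological gives Mackey. The \((DF)\) property follows from Grothendieck's theorem that a countable inductive limit of normed spaces is a \((DF)\)-space (cf. Schaefer \cite{scheafer}).

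For \(\,^\prime{\cal B}_k\), the completion of a Hausdorff locally convex space is again a Hausdorff locally convex space, so those two properties are immediate, and stability of the class of \((DF)\)-spaces under completion (Grothendieck) gives \((DF)\). The one genuinely non-formal point is barreledness, which I would prove directly from the dual characterization, namely that a space is barreled iff every weakly bounded subset of its dual is equicontinuous. Completion changes neither the dual, \((\,^\prime{\cal B}_k)' = (\hat{\cal{B}}_k)'\), nor the equicontinuous sets, since an absolutely convex neighborhood in \(\hat{\cal{B}}_k\) and its closure in the completion have the same polar. Thus, given \(B \subset (\,^\prime{\cal B}_k)'\) that is \(\sigma\bigl((\,^\prime{\cal B}_k)', \,^\prime{\cal B}_k\bigr)\)-bounded, it is a fortiori \(\sigma\bigl((\hat{\cal{B}}_k)', \hat{\cal{B}}_k\bigr)\)-bounded because there are fewer test vectors; hence it is equicontinuous since \(\hat{\cal{B}}_k\) is barreled, hence equicontinuous as a subset of \((\,^\prime{\cal B}_k)'\). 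So \(\,^\prime{\cal B}_k\) is barreled, and barreled implies Mackey, completing the list.

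The hard part will be exactly this barreledness of the completion, together with the discipline of \emph{not} over-claiming: barreledness happens to survive completion by the dual-pair argument above, whereas bornologicity and ultrabornologicity are not stable under completion, which is the reason the second list is strictly shorter than the first. A secondary technical point to nail down is the injectivity of \(u\), i.e. that the forms in \({\cal B}_k\) genuinely separate the points of the completion; this rests on \({\cal B}_k\) being dense in each \({\cal B}_k^r\) in the \(C^r\)-norm, so that the supremum defining \(\|\cdot\|_{B^r}\) may be taken over \({\cal B}_k\) alone.
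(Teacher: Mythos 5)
Your overall architecture coincides with the paper's: every property in the first list is extracted from the presentation of \( \hat{\cal{B}}_k \) as a countable inductive limit of Banach spaces (the paper quotes K\"othe for ultrabornological/bornological/\((DF)\), Robertson for Mackey, and Bourbaki for barreledness, where you instead derive bornological, barreled and Mackey as consequences of ultrabornologicity --- equally valid), and the second list is obtained by pushing properties through the completion, dropping bornologicity precisely because it is not stable under completion (the paper cites Valdivia for this, as do you in spirit). Your direct proof that barreledness survives completion --- the dual and the equicontinuous families are unchanged, a \( \sigma\bigl((\,^\prime{\cal B}_k)',\,^\prime{\cal B}_k\bigr) \)-bounded set is a fortiori \( \sigma\bigl((\hat{\cal{B}}_k)',\hat{\cal{B}}_k\bigr) \)-bounded, and equicontinuity on a dense subspace extends to the completion since the closure of a neighborhood of \(0\) in \( \hat{\cal{B}}_k \) is a neighborhood of \(0\) in \( \,^\prime{\cal B}_k \) --- is a correct unwinding of the Schaefer exercise the paper simply cites, and is a worthwhile addition.

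The genuine gap is Hausdorffness. The paper does not prove it here; it is quoted from \cite{OC}, and for good reason: it is the one point that is not formal. You reduce it to injectivity of \( u\colon \hat{\cal{B}}_k \to {\cal B}_k' \), i.e.\ to the claim that the \emph{smooth} forms \( {\cal B}_k \) separate points of \( \hat{\cal{B}}_k \), and you justify that by asserting that \( {\cal B}_k \) is dense in each \( {\cal B}_k^r \) in the \( C^r \)-norm. That density statement is false. Already for \( k=0 \), \( r=0 \), \( n=1 \): the function \( f(x)=\sin(x^2) \) lies in \( {\cal B}_0^0(\R) \), but any \( g \) with \( \|f-g\|_{C^0}\le 1/4 \) must oscillate by at least \( 3/2 \) across intervals whose lengths tend to \(0\), hence has unbounded first derivative; so no element of \( {\cal B}_0(\R) \) is uniformly close to \( f \). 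What is ``built into the definition'' of \( \|\cdot\|_{B^r} \) is only that the \emph{non-smooth} class \( {\cal B}_k^r \) is norming, hence point-separating, on \( \hat{\cal{B}}_k^r \); to pass from a functional \( \o\in{\cal B}_k^r \) to smooth ones you must mollify, \( \o\mapsto \o*\rho_\e \), and the relevant estimate \( \|\o-\o*\rho_\e\|_{C^{r}} \le C\,\e\,\|\o\|_{C^{r+1}} \) costs one derivative, so the approximation argument has to be run by pairing against the image of the chain one level up the inductive system (or one appeals to the representation theorems of the cited papers). Without that loss-of-one-derivative argument, the injectivity of \( u \), and hence the Hausdorffness of both \( \hat{\cal{B}}_k \) and \( \,^\prime{\cal B}_k \), remains unproven; note also that the same issue is hidden in your unexamined assertion that the linking maps \( \hat{\cal{B}}_k^r \to \hat{\cal{B}}_k^{r+1} \) are \emph{inclusions}.
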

\begin{proof} 
	By definition, the topology on \( \hat{\cal{B}}_k \) is locally convex.   We showed that \( \hat{\cal{B}}_k \) is Hausdorff in  \cite{OC}.  
  According to K\:othe \cite{kothe}, p. 403, a locally convex space is a bornological \( (DF) \) space if and only if  it is the inductive limit of an increasing sequence of normed spaces.   It is \emph{ultrabornological} if it is the inductive limit of Banach spaces.       Therefore, \( \hat{\cal{B}}_k \) is an ultrabornological \( (DF) \)-space. 

	 Every inductive limit of metrizable convex spaces is a Mackey space (Robertson \cite{robertson} p. 82).  Therefore, \( \hat{\cal{B}}_k \) is a Mackey space. It is barreled according to Bourbaki \cite{bourbaki}    III 45, 19(a).

  The completion of any locally convex Hausdorff space is also locally convex and Hausdorff.  The completion of a barreled space is barreled by Schaefer \cite{scheafer}, p. 70 exercise 15  and  the completion of a \( (DF) \)-space is  \( (DF) \) by  \cite{scheafer} p.196 exercise 24(d). But the completion of a bornological space may not be bornological (Valdivia \cite{valdivia2})  
\end{proof}

\begin{thm}[Characterization 1]\label{mackey}
	The space of differential chains $\,^\prime {\cal B}_k$ is the completion of pointed chains ${\cal P}_k$ given the Mackey topology $\tau({\cal P}_k,\B_k)$.  
\end{thm}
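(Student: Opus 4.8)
The plan is to reduce the statement to the single topological identity $\tau|_{{\cal P}_k} = \tau({\cal P}_k,{\cal B}_k)$, where $\tau|_{{\cal P}_k}$ denotes the topology that ${\cal P}_k$ inherits as a subspace of $\hat{\cal B}_k$. Since $\,^\prime{\cal B}_k$ is by definition the completion of $\hat{\cal B}_k$, and since ${\cal P}_k$ is dense in $\hat{\cal B}_k$ (it is $\|\cdot\|_{B^r}$-dense in each Banach space $\hat{\cal B}_k^r$, hence dense in the inductive limit), the completion of $({\cal P}_k,\tau|_{{\cal P}_k})$ is again $\,^\prime{\cal B}_k$. Thus once the two topologies are shown to agree on ${\cal P}_k$ the theorem follows; note also that passing to a completion does not change the continuous dual, so the bookkeeping of duals is automatic.

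First I would pin down the dual pairing. The bilinear form $\langle A,\omega\rangle = \cint_A\omega$ is separating: it is non-degenerate in $A$ because each $\|\cdot\|_{B^r}$ is a genuine norm on ${\cal P}_k$, and non-degenerate in $\omega$ because a nonzero form takes a nonzero value on some pointed chain $(p;\alpha)$. Invoking the identification $(\hat{\cal B}_k)' = {\cal B}_k$ established in \cite{OC}, together with the density of ${\cal P}_k$, I get $({\cal P}_k,\tau|_{{\cal P}_k})' = {\cal B}_k$. Hence $\tau|_{{\cal P}_k}$ is a topology of the dual pair $\langle {\cal P}_k,{\cal B}_k\rangle$, and the Mackey--Arens theorem immediately yields the easy inclusion $\tau|_{{\cal P}_k}\subseteq\tau({\cal P}_k,{\cal B}_k)$.

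The substance is the reverse inclusion, for which I would show that $({\cal P}_k,\tau|_{{\cal P}_k})$ actually carries the Mackey topology. Equivalently, by Mackey--Arens, every absolutely convex $\sigma({\cal B}_k,{\cal P}_k)$-compact set $K\subseteq{\cal B}_k$ must be $\tau|_{{\cal P}_k}$-equicontinuous. The route I propose is: (i) show $K$ is bounded in the Fr\'echet space ${\cal B}_k = \varprojlim_r{\cal B}_k^r$, i.e. bounded in each norm $\|\cdot\|_{C^r}$; (ii) using the finiteness of $\sup_{\omega\in K}\|\omega\|_{C^r}$ together with the density of ${\cal P}_k$ in each $\hat{\cal B}_k^r$ and the estimate $|\langle B-A,\omega\rangle|\le \|B-A\|_{B^r}\,\|\omega\|_{C^r}$, conclude that the functionals $\langle A,\cdot\rangle$ $(A\in{\cal P}_k)$ converge uniformly on $K$ to $\langle B,\cdot\rangle$ for every $B\in\hat{\cal B}_k$, so that $\sigma({\cal B}_k,{\cal P}_k)$ and $\sigma({\cal B}_k,\hat{\cal B}_k)$ coincide on $K$ and $K$ is $\sigma({\cal B}_k,\hat{\cal B}_k)$-compact; (iii) invoke the barreledness of $\hat{\cal B}_k$ from Proposition \ref{laundry} to deduce that $K$ is equicontinuous in $\hat{\cal B}_k$, and restrict to ${\cal P}_k$. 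An equivalent packaging is to identify $\tau|_{{\cal P}_k}$ with the intrinsic inductive limit $\varinjlim_r({\cal P}_k,\|\cdot\|_{B^r})$, which is a bornological $(DF)$-space by the K\"othe criterion quoted in Proposition \ref{laundry} and hence automatically Mackey.

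I expect step (i) to be the main obstacle: a set that is only compact for the weak topology induced by the comparatively small predual ${\cal P}_k$ need not be visibly bounded in the much finer Fr\'echet topology of ${\cal B}_k$, and bridging this gap is a Banach--Steinhaus / barreledness argument that must use the density of ${\cal P}_k$ and the norming property of the pairing in an essential way. In the inductive-limit packaging the same difficulty reappears as the nontrivial claim that the subspace topology $\tau|_{{\cal P}_k}$ is no coarser than the intrinsic inductive limit $\varinjlim_r({\cal P}_k,\|\cdot\|_{B^r})$ --- that is, that the inductive limit is well behaved on the dense subspace ${\cal P}_k$ --- which is precisely where the specific geometry of $\hat{\cal B}_k$ must enter.
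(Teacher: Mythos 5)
Your proposal is correct in outline and follows the same route as the paper's own proof: both reduce the theorem to the identity $\t|_{{\cal P}_k}=\tau({\cal P}_k,\B_k)$, both get the easy containment from the duality $(\hB_k)'=\B_k$ together with Mackey--Arens, and both derive the hard containment from the fact (Proposition \ref{laundry}) that $\hB_k$ is a Mackey space, i.e.\ carries the topology of uniform convergence on absolutely convex relatively $\s(\B_k,\hB_k)$-compact sets. The difference is one of resolution. The paper compresses your steps (i)--(iii) into the single sentence ``But then $\t|_{{\cal P}_k}$ is the topology of uniform convergence on relatively $\s(\B_k,{\cal P}_k)$-compact sets,'' offered without justification; you correctly identify that sentence as the real content of the hard direction, namely that an absolutely convex $\s(\B_k,{\cal P}_k)$-compact set $K\subset\B_k$ must already be $\s(\B_k,\hB_k)$-compact. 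So the obstacle you flag in step (i) is genuine, and it is exactly the point the paper passes over; but since you leave it unproved, your proposal as written is incomplete at the same place the paper's proof is.

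The gap can be closed, and your instinct (Banach--Steinhaus plus density plus the norming pairing) is the right one, but note that Banach--Steinhaus cannot be applied naively: $({\cal P}_k,\|\cdot\|_{B^r})$ is not complete, and pointwise boundedness of $K$ on the dense subspace ${\cal P}_k$ does not automatically propagate to $\hB_k^r$. The missing ingredient is that an absolutely convex compact subset of a Hausdorff locally convex space is a Banach disk: its span $E_K$, normed by the gauge $p_K$ of $K$, is a Banach space. Fix $r$, fix $B\in\hB_k^r$, and choose $A_n\in{\cal P}_k$ with $\|A_n-B\|_{B^r}\to0$. Each functional $\omega\mapsto\langle A_n,\omega\rangle$ is $\s(\B_k,{\cal P}_k)$-continuous, hence bounded on the compact set $K$, hence continuous on $(E_K,p_K)$; moreover, for fixed $\omega\in E_K$ we have $|\langle A_n-B,\omega\rangle|\le\|A_n-B\|_{B^r}\,\|\omega\|_{C^r}\to0$, so these functionals converge pointwise on $E_K$ to $\omega\mapsto\langle B,\omega\rangle$. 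Banach--Steinhaus in the Banach space $E_K$ makes this limit bounded on $K$; thus $K$ is pointwise bounded on all of $\hB_k^r$, and a second application of Banach--Steinhaus, now in the Banach space $\hB_k^r$, gives $\sup_{\omega\in K}\|\omega\|_{C^r}<\infty$, which is your step (i). Your steps (ii) and (iii) then run exactly as you describe: the estimate above shows $\langle B,\cdot\rangle$ is a uniform limit on $K$ of $\s(\B_k,{\cal P}_k)$-continuous functions, so $\s(\B_k,{\cal P}_k)$ and $\s(\B_k,\hB_k)$ agree on $K$ and $K$ is $\s(\B_k,\hB_k)$-compact, whence Mackey-ness (or barreledness) of $\hB_k$ finishes the hard direction. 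With this inserted, your argument is a complete proof --- indeed a more complete one than the paper gives.
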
  

\begin{proof} 
Since \( ({\cal P}_k, \B_k) \) is a dual pair, the Mackey topology \( \tau({\cal P}_k,\B_k) \) is well-defined (Robertson \cite{robertson}).  This is  the finest locally convex topology on ${\cal P}_k$ such that the continuous dual ${\cal P}_k'$ is equal to $\B_k$. 

  	Let $\tau|_{{\cal P}_k}$ be the subspace topology on pointed chains ${\cal P}_k$ given the inclusion of ${\cal P}_k$ into $\hB_k$.    By (\ref{laundry}), the topology on $\hB_k$ is Mackey; explicitly it is the topology of uniform convergence on relatively \( \s({\cal B}_k, \hat{\cal{B}_k}) \)-compact sets where \( \s({\cal B}_k, \hat{\cal{B}_k}) \) is the weak topology on the dual pair \( ({\cal B}_k, \hat{\cal{B}_k}) \).  But then \( \t|_{{\cal P}_k} \)   is the topology of uniform convergence on relatively \( \s({\cal B}_k,  {\cal P}_k) \)-compact sets, which is the same as the Mackey topology \( \t \) on \( {\cal P}_k \).     Therefore, \( \t|_{{\cal P}_k} = \tau({\cal P}_k,\B_k) \).

\end{proof}

\section{Relation of Differential Chains to Currents} 
\label{sec:currents}

     The Fr\'echet 
topology \( F \) on \( {\cal B}  = {\cal B}_k\) is determined by the seminorms \( \|\omega\|_{C^r} =\sup_{J\in Q^r}\omega(J)\), where $Q^r$ is the image of the unit ball in $\hB_k^r$ via the inclusion\footnote{The authors show this inclusion is compact in a sequel.} $\hB_k^r\hookrightarrow\hB_k$.  

  \begin{lem}\label{lem:inclu}  
	\( ({\cal B}, \b({\cal B}, \,^\prime {\cal B})) = ({\cal B}, F) \). 
  \end{lem}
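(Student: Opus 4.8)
The plan is to exhibit both topologies as complete metrizable topologies on the single vector space \( \B = \B_k \) and then force them to agree by the open mapping theorem, rather than by directly comparing bounded sets. Everything takes place in the dual pairing \( \langle \B, \,^\prime\B \rangle \) provided by \( (\,^\prime\B)' = \B \), so \( F \) and \( \b(\B,\,^\prime\B) \) are two locally convex topologies on the \emph{same} space, and it is enough to prove that the identity map is a homeomorphism between them.

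First I would dispatch the easy inclusion \( F \subseteq \b(\B,\,^\prime\B) \). Each \( Q^r \) is the image of the (balanced) unit ball of the Banach space \( \hB_k^r \) under the continuous maps \( \hB_k^r \hookrightarrow \hB_k \hookrightarrow \,^\prime\B \), and continuous linear maps carry bounded sets to bounded sets, so \( Q^r \) is bounded in \( \,^\prime\B \). Since \( Q^r \) is balanced, \( \|\omega\|_{C^r} = \sup_{J \in Q^r}\omega(J) = \sup_{J\in Q^r}|\omega(J)| \) is exactly the strong seminorm of uniform convergence on the bounded set \( Q^r \). Hence every generating seminorm of \( F \) is a \( \b(\B,\,^\prime\B) \)-seminorm, so \( \b(\B,\,^\prime\B) \) is finer than \( F \); equivalently the identity \( (\B,\b(\B,\,^\prime\B)) \to (\B, F) \) is continuous.

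Next I would record that both topologies are Fréchet. For \( F \) this is built into the construction: it is the projective-limit topology on \( \B = \varprojlim_r \B_k^r \), generated by the countable decreasing family \( \{\|\cdot\|_{C^r}\}_{r\ge 0} \), and a projective limit of Banach spaces is complete and metrizable. For \( \b \) I would invoke Grothendieck's theorem (see K\"othe \cite{kothe}) that the strong dual of a \( (DF) \)-space is a Fréchet space: by Proposition \ref{laundry} the space \( \,^\prime\B \) is \( (DF) \), so its strong dual \( (\B, \b(\B,\,^\prime\B)) \) is Fréchet. The identity \( (\B,\b(\B,\,^\prime\B)) \to (\B,F) \) is then a continuous linear bijection of Fréchet spaces, hence open by the open mapping theorem, hence a topological isomorphism; together with the continuity established above this gives \( \b(\B,\,^\prime\B) = F \).

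The step I expect to require the most care is the appeal to Grothendieck's theorem, which hinges on two facts that must be cited precisely rather than reproved here: that \( \,^\prime\B \), as the completion of the \( (DF) \)-space \( \hB_k \), is again \( (DF) \), and that \( (\,^\prime\B)' = \B \), so that the strong dual is genuinely the complete metrizable space \( (\B,\b) \) to which the open mapping theorem applies; both are available from Proposition \ref{laundry} and the predual construction. A more hands-on alternative would bypass this theorem by proving directly that \( \{Q^r\} \) is a fundamental sequence of bounded sets in \( \,^\prime\B \) — equivalently that the inductive limit \( \hB_k = \varinjlim_r \hB_k^r \) is regular — which would immediately identify \( \b \) with the topology generated by the \( p_{Q^r} = \|\cdot\|_{C^r} \). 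However, establishing that regularity appears to be the genuinely delicate point (and is not automatic for an \( (LB) \)-space), which is exactly why I prefer routing the argument through completeness and the open mapping theorem.
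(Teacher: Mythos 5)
Your proposal is correct and follows essentially the same route as the paper: both establish \( F \subseteq \b(\B, \,^\prime\B) \) by showing the \( \|\cdot\|_{C^r} \) are strong seminorms (you via boundedness of \( Q^r \) in \( \,^\prime\B \), the paper via absorbency of the polar \( {Q^r}^0 \) — the same fact in dual form), then note that \( \b(\B,\,^\prime\B) \), as the strong dual of the \( (DF) \)-space \( \,^\prime\B \), is Fréchet, and conclude equality of the two comparable Fréchet topologies. The paper compresses your final open-mapping-theorem step into a citation of Robertson, but the argument is the same.
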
      

\begin{proof} 
First, we show $F$ is coarser than $\b({\cal B}, \,^\prime {\cal B})$.  It is enough to show that the Fr\'echet seminorms are seminorms for $\beta$.  For every form $\o\in \B_k$ there exists a scalar $\lambda_{\o, r}$ such that $\|\lambda_{\o,r}\o\|_{C^r}\leq 1$.  In other words, ${Q^r}^0$, the polar of $Q^r$ in $\B_r$, is absorbent and hence $\|\cdot\|_{C^r}$ is a seminorm for $\beta$.  

On the other hand, $\b({\cal B}, \,^\prime {\cal B})$, as the strong dual topology of a $(DF)$ space, is also Fr\'echet, and hence by \cite{robertson} the two topologies are equal.
\end{proof}  
\begin{thm}\label{thm:pro}
  	The vector space \(    \,^\prime {\cal B}(\R^n)  \)  is a proper subspace of the vector space  \( {\cal B}'(\R^n) \).  

\end{thm}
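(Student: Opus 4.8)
The plan is to recognize $u$ as the canonical embedding of $\,^\prime{\cal B}$ into its own strong bidual, and then to prove that the inclusion is proper by showing that $\,^\prime{\cal B}$ fails to be semireflexive. By Theorem \ref{mackey} the dual of $\,^\prime{\cal B}$ is $\B_k$, and by Lemma \ref{lem:inclu} its strong dual is exactly the Fréchet space $(\B_k,F)$; hence the strong bidual is $(\B_k,F)' = {\cal B}'$, and $u$ is precisely the evaluation map $\,^\prime{\cal B}\to{\cal B}'$. Thus $u$ is surjective (equivalently $\,^\prime{\cal B}={\cal B}'$ as vector spaces) if and only if $\,^\prime{\cal B}$ is semireflexive. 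Since $\,^\prime{\cal B}$ is barreled by Proposition \ref{laundry}, semireflexivity is equivalent to reflexivity, and reflexivity passes between a space and its strong dual; as the strong dual $(\B_k,F)$ is Fréchet, hence barreled, the whole question collapses to the single concrete one: \emph{are bounded subsets of $\B_k(\R^n)$ relatively $\sigma(\B_k,{\cal B}')$-compact?} If not, then $\B_k$ is not semireflexive, $\,^\prime{\cal B}$ is not reflexive, $u$ is not onto, and the theorem follows.

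To break semireflexivity of $\B_k(\R^n)$ I would exhibit a bounded sequence with no weak cluster point. Fix a smooth profile $\psi\colon\R\to[0,1]$ with $\psi\equiv 0$ on $(-\infty,0]$, $\psi\equiv 1$ on $[1,\infty)$ and all derivatives bounded, and set $h_m = \psi(x_1-m)\,dx^1\wedge\cdots\wedge dx^k$. Translation invariance makes $\{h_m\}$ bounded in every seminorm $\|\cdot\|_{C^r}$, so it is bounded in $\B_k$. For each fixed point $p$ one has $h_m(p)\to 0$, so every pointed (Dirac) evaluation current sends $\{h_m\}$ to a sequence converging to $0$; since such evaluations separate the points of $\B_k$, the \emph{only} possible weak cluster point of $\{h_m\}$ is $0$. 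On the other hand, choosing a free ultrafilter refining the tail filter on the ray $\{t e_1 : t>0\}$ and letting $T$ return the ultrafilter limit of the $dx^1\wedge\cdots\wedge dx^k$-coefficient along it defines a functional with $|T(\omega)|\le\|\omega\|_{C^0}$, so $T\in{\cal B}'$, while $T(h_m)=1$ for every $m$. Because $T(0)=0$ cannot be a cluster value of the constant sequence $(1)$, the candidate cluster point $0$ is excluded, and $\{h_m\}$ has no weak cluster point at all.

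Assembling these gives the theorem: $\{h_m\}$ is bounded but not relatively weakly compact, so $\B_k(\R^n)$ is not semireflexive, and by the reduction $u$ is not surjective; moreover $T$ is then an explicit current that is not a differential chain. The hard part will be the construction in the second paragraph — specifically, verifying that $T$ is genuinely continuous on the Fréchet space (so that it really lies in ${\cal B}'$) and that the pointed evaluations separate points strongly enough to pin the only candidate cluster point to $0$. A secondary point requiring care is the reduction's reliance on the symmetry of reflexivity between a barreled space and its strong dual, together with the elementary but essential observation that a family of translates remains bounded in \emph{all} of the $C^r$ seminorms simultaneously.
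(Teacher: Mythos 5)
Your proof is correct, and its reduction is exactly the paper's: identify the strong dual of $\,^\prime{\cal B}$ with $(\B_k,F)$ via Lemma \ref{lem:inclu}, use barreledness from Proposition \ref{laundry} to equate semireflexivity with reflexivity, and use the (one-directional) fact that the strong dual of a reflexive space is reflexive, so that properness of $u$ collapses to non-(semi)reflexivity of the Fr\'echet space $(\B_k,F)$. Where you genuinely diverge is at that final input: the paper simply quotes Schwartz, who states that $({\cal B})$ ``ne sont pas r\'eflexifs,'' and derives a contradiction, whereas you re-prove this from scratch by exhibiting the translated profiles $h_m=\psi(x_1-m)\,dx^1\wedge\cdots\wedge dx^k$, which are bounded in every $C^r$ seminorm, pinning any $\sigma(\B_k,\B_k')$-cluster point to $0$ by Dirac evaluations, and excluding $0$ with the ultrafilter-limit functional $T$ (which is indeed in $\B_k'$, since $|T(\omega)|\le\|\omega\|_{C^0}$ makes it continuous for one of the Fr\'echet seminorms). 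Your route is longer but self-contained, it works uniformly in the degree $k$ (Schwartz's statement concerns spaces of functions, i.e.\ the case $k=0$; the paper leaves the passage to $k$-forms implicit), and it produces a concrete bounded set that is not relatively weakly compact rather than an appeal to authority. Two cautions. First, ``reflexivity passes between a space and its strong dual'' is not an unconditional biconditional; only the direction you actually use (reflexive $\Rightarrow$ strong dual reflexive, hence semireflexive) is free of hypotheses, so state it one-sidedly. Second, your closing remark that $T$ is ``an explicit current that is not a differential chain'' does not follow formally from non-surjectivity of $u$: to certify $T\notin u(\,^\prime{\cal B})$ you would need that $A(h_m)\to 0$ for every $A\in\,^\prime{\cal B}$, which is easy for $A$ in some $\hB_k^r$ (approximate by pointed chains, using $\sup_m\|h_m\|_{C^r}<\infty$) but for the completion requires comparing the Mackey topology with the strong subspace topology, i.e.\ essentially Theorem \ref{topsubspace}; since that claim is a dispensable bonus, this is not a gap in your proof of the theorem itself.
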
   

Remark:  \(  \,^\prime {\cal B} \) and \(  {\cal{B}} \) are barreled.  Thus semi-reflexive and reflexive are identical for both spaces.

\begin{proof}  Schwartz defines \( ({\cal B}) \) in \S 8 on page 55 of \cite{schwartz1} as the space of functions with all derivatives bounded on \( \R^n \), and endows it with   the Fr\'echet space topology, just as we have done.   He writes on page 56, ``\( ({\cal D}_{L^1}), (\dot{\cal B}),({\cal B}), \) ne sont pas r\'eflexifs.''     Suppose \( \,^\prime {\cal B}_k(\R^n) \) is reflexive.  By Lemma \ref{lem:inclu} the strong dual of  \( \,^\prime {\cal B}_k(\R^n) \) is \( ({\cal B}(\R^n), F) \). 
	 This implies that \( ({\cal B}(\R^n), F) \) is  reflexive, contradicting Schwartz.
\end{proof}

We immediately deduce:
  
\begin{thm}\label{topsubspace}
	The space $  \,^\prime {\cal B}_k$ carries the subspace topology of $\B_k'$, where $\B_k'$ is given the strong topology $\beta(\B_k', \B_k)$.  
\end{thm}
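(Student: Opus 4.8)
The plan is to recognize the canonical inclusion $u \colon \,^\prime{\cal B}_k \to {\cal B}_k'$ as nothing other than the canonical embedding of $\,^\prime{\cal B}_k$ into its own strong bidual, and then to read off the desired topological identity from the fact (Proposition \ref{laundry}) that $\,^\prime{\cal B}_k$ is barreled. First I would do the duality bookkeeping. Since $(\,^\prime{\cal B}_k, \B_k)$ is a dual pair and $\,^\prime{\cal B}_k$ carries its Mackey topology, its continuous dual is $\B_k$, and by Lemma \ref{lem:inclu} its strong dual is $(\,^\prime{\cal B}_k)'_\beta = (\B_k, \beta(\B_k, \,^\prime{\cal B}_k)) = (\B_k, F)$, a Fréchet space. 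Taking the strong dual once more identifies the strong bidual $(\,^\prime{\cal B}_k)''_\beta = ((\B_k, F))'_\beta = (\B_k', \beta(\B_k', \B_k)) = \B_k'$ with its strong topology. Under this identification the canonical map $J \colon \,^\prime{\cal B}_k \to (\,^\prime{\cal B}_k)''_\beta$, sending $A$ to the evaluation functional $\omega \mapsto \langle A, \omega\rangle$ on $\B_k$, is precisely the inclusion $u$; injectivity is automatic since $\sigma(\,^\prime{\cal B}_k, \B_k)$ separates points.

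Next I would compare the two topologies that $\,^\prime{\cal B}_k$ thereby inherits. Its intrinsic topology is the topology of uniform convergence on the equicontinuous subsets of $\B_k$, whereas the topology induced from $\B_k'$ (strong) through $J$ is that of uniform convergence on the strongly bounded, i.e.\ $\beta(\B_k', \B_k)$-bounded, subsets of $\B_k$. Because every equicontinuous set is strongly bounded, the induced topology is a priori the finer of the two. For the reverse inclusion it suffices that every strongly bounded subset of $\B_k$ be equicontinuous, which is exactly the statement that $\,^\prime{\cal B}_k$ is quasi-barreled (infrabarreled). But $\,^\prime{\cal B}_k$ is barreled by Proposition \ref{laundry}: a strongly bounded subset of $\B_k$ is a fortiori $\sigma(\B_k, \,^\prime{\cal B}_k)$-bounded, and in a barreled space every such weakly bounded subset of the dual is equicontinuous. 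Hence the two polar topologies coincide, $J = u$ is a topological embedding, and $\,^\prime{\cal B}_k$ carries the subspace topology of $\B_k'$.

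The step carrying the genuine content is the identification of $\B_k'$ (strong) with the strong bidual of $\,^\prime{\cal B}_k$; once Lemma \ref{lem:inclu} furnishes $(\,^\prime{\cal B}_k)'_\beta = (\B_k, F)$ this becomes essentially formal, which is why the conclusion is ``immediate.'' The only point requiring care is the distinction between $\sigma(\B_k, \,^\prime{\cal B}_k)$-boundedness and $\beta(\B_k', \B_k)$-boundedness in $\B_k$: I would make explicit that strong-boundedness implies weak-boundedness, so that barreledness already yields the quasi-barreled conclusion actually needed, and I would state plainly that equicontinuous and strongly bounded subsets of $\B_k$ generate the same topology of uniform convergence, as this is where every hypothesis is used. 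I anticipate no substantive obstacle beyond organizing these standard duality facts in the right order.
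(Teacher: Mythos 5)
Your proof is correct and is precisely the argument the paper leaves implicit behind its ``We immediately deduce'': Lemma \ref{lem:inclu} identifies the strong dual of \( \,^\prime{\cal B}_k \) as the Fr\'echet space \( ({\cal B}_k, F) \), whose strong dual is \( ({\cal B}_k', \beta({\cal B}_k', {\cal B}_k)) \), and barreledness (hence quasi-barreledness) from Proposition \ref{laundry} makes the canonical map into this strong bidual a topological embedding, so your route and the paper's coincide. One notational slip to fix: the sets on which the induced topology is uniform convergence are the \( \beta({\cal B}_k, \,^\prime{\cal B}_k) \)-bounded (equivalently, by Lemma \ref{lem:inclu}, the \( F \)-bounded) subsets of \( {\cal B}_k \), not ``\( \beta({\cal B}_k', {\cal B}_k) \)-bounded'' ones, since the latter is a topology on \( {\cal B}_k' \) rather than on \( {\cal B}_k \).
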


\begin{cor}[Characterization 2]\label{strong}
	The topology $\tau({\cal P}_k, \B_k)$ on ${\cal P}_k$ is the subspace topology on ${\cal P}_k$ considered as a subspace of $(\B_k', \beta(\B_k', \B_k))$.  
\end{cor}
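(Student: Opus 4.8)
The plan is to obtain this as a formal consequence of the two preceding theorems, exploiting the transitivity of the subspace topology along the chain of inclusions \( {\cal P}_k \subset \,^\prime {\cal B}_k \subset \B_k' \).

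First I would recall from the proof of Theorem \ref{mackey} that, when \( \,^\prime {\cal B}_k \) carries its native topology \( \t \) (the inductive-limit topology inherited from \( \hB_k \)), the subspace topology \( \t|_{{\cal P}_k} \) coincides with the Mackey topology \( \tau({\cal P}_k, \B_k) \). This identification is exactly what was established there, so no new work is needed to secure it; I would simply cite it from the body of that proof rather than from its statement, since the statement records only the completion and not the restriction.

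Next I would invoke Theorem \ref{topsubspace}, which asserts that \( \,^\prime {\cal B}_k \) carries precisely the subspace topology inherited from \( (\B_k', \beta(\B_k', \B_k)) \). The key step is then to combine the two facts via transitivity of subspaces: if \( A \subset B \subset C \) and \( B \) is given the subspace topology from \( C \), then the subspace topology induced on \( A \) by \( C \) equals that induced on \( A \) by \( B \). Applying this with \( A = {\cal P}_k \), \( B = \,^\prime {\cal B}_k \), and \( C = (\B_k', \beta(\B_k', \B_k)) \), the subspace topology that \( {\cal P}_k \) inherits from \( \B_k' \) agrees with \( \t|_{{\cal P}_k} \), which by the previous step is \( \tau({\cal P}_k, \B_k) \). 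This is the claimed identity.

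There is essentially no analytic obstacle here, as the corollary is a purely formal consequence of the topological identities already proved. The only point requiring care is to make the transitivity-of-subspaces principle explicit and to verify that Theorem \ref{topsubspace} genuinely yields a subspace-topology identity on the nose, so that transitivity applies directly without any intervening coarsening or refinement; that is the single verification I expect to confirm before concluding.
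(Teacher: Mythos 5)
Your proposal is correct and matches the paper's intended argument: the corollary appears there with no written proof precisely because it is the ``immediate'' combination of Theorem \ref{mackey} (whose proof establishes \( \t|_{{\cal P}_k} = \tau({\cal P}_k,\B_k) \)) with Theorem \ref{topsubspace}, glued by transitivity of the subspace topology along \( {\cal P}_k \subset \,^\prime{\cal B}_k \subset \B_k' \). Your care in citing the restriction identity from the body of the proof of Theorem \ref{mackey}, rather than from its statement, is exactly the right reading.
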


\begin{remarks}
	We immediately see that ${\cal P}_k$ is not dense in $\B_k'$.  Compare this to the Banach-Alaoglu theorem, which implies ${\cal P}_k$ is weakly dense in $\B_k'$, whereas ${\cal P}_k$ is strongly dense in $  \,^\prime {\cal B}_k$.
 
In fact, Corollaries \ref{mackey} and \ref{strong} suggest a more general statement: let $E$ be an arbitrary locally convex topological vector space.  Elements of \( E \) will be our ``generalized forms.''  Let $P$ be the vector space generated by extremal points of open neighborhoods of the origin in $E'$ given the strong topology.  These will be our ``generalized pointed chains.''  Then $(P,E)$ forms a dual pair and so we may put the Mackey topology $\tau$ on $P$.  We may also put the subspace topology $\sigma$ on $P$, considered as a subspace of $E'$ with the strong topology.  We ask the following questions: under what conditions on $E$ will $\tau=\sigma$?  Under what conditions will $P$ be strongly dense in $E'$?  What happens when we replace $\B$ with $\mathcal{D}$, $\mathcal{E}$ or $\mathcal{S}$, the Schwartz space of forms rapidly decreasing at infinity?

\end{remarks}

\begin{thm}\label{antonyms}
	The space $\,^\prime {\cal B}_k(\R^n)$ is not nuclear, normable, metrizable, Montel, or reflexive.
\end{thm}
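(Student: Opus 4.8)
The plan is to reduce all five assertions to the single fact, already obtained in the proof of Theorem \ref{thm:pro}, that $\,^\prime\B_k(\R^n)$ fails to be reflexive, and then to feed this through standard structural implications using the properties recorded in Proposition \ref{laundry} (namely that $\,^\prime\B_k(\R^n)$ is a complete, barreled $(DF)$-space) and in Lemma \ref{lem:inclu} (that its strong dual is the Fr\'echet space $(\B,F)$). The clause ``not reflexive'' therefore requires nothing new: the argument of Theorem \ref{thm:pro} shows that reflexivity of $\,^\prime\B_k(\R^n)$ would make its strong dual $(\B,F)$ reflexive, contradicting Schwartz.

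Two of the remaining clauses follow by purely formal implications. Since every Montel space is reflexive, a non-reflexive space cannot be Montel, giving ``not Montel.'' For ``not nuclear'' I would use that bounded subsets of a nuclear space are precompact; completeness of $\,^\prime\B_k(\R^n)$ then upgrades this to relative compactness, so the space would be semi-Montel, and a barreled semi-Montel space is Montel. Thus nuclearity would force the space to be Montel, hence reflexive, contradicting the previous paragraph. Here the only inputs beyond general theory are the completeness and barreledness furnished by Proposition \ref{laundry}.

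The two ``size'' clauses I would settle through the strong dual. If $\,^\prime\B_k(\R^n)$ were normable, its strong dual would be a Banach space and in particular normable; but by Lemma \ref{lem:inclu} that dual is $(\B,F)$, the space of forms with all derivatives bounded, whose defining seminorms $\|\cdot\|_{C^r}$ strictly increase without stabilizing and hence cannot be given by a single norm. This contradiction yields ``not normable.'' Since a metrizable $(DF)$-space is normable (K\"othe), and $\,^\prime\B_k(\R^n)$ is a $(DF)$-space by Proposition \ref{laundry}, the failure of normability immediately gives ``not metrizable.''

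The work here is organizational rather than technical: once non-reflexivity is granted, each property is a one-line consequence of a well-known theorem, and the only real care needed is to route each implication through exactly the hypotheses already verified. The step most easily mishandled is the nuclear one, where one must explicitly combine precompactness of bounded sets with completeness and barreledness to land on Montel; a close second is recording that $(\B,F)$ is genuinely non-normable rather than taking it for granted.
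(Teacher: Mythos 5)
Your proposal is correct and follows essentially the same route as the paper: non-reflexivity is inherited from Theorem \ref{thm:pro}, ``not Montel'' follows since Montel implies reflexive, ``not normable'' passes through the strong dual $(\B,F)$ of Lemma \ref{lem:inclu} being non-normable, and ``not metrizable'' uses that a metrizable $(DF)$-space is normable. The only cosmetic difference is the nuclear clause, where you route through precompactness of bounded sets, semi-Montel, and barreledness to reach Montel, while the paper invokes directly that a complete nuclear space is semi-reflexive (hence reflexive, being barreled) --- the same standard fact in slightly different clothing.
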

\begin{proof}
	 The fact that $\hB_k(\R^n)$ is not  reflexive follows from Theorem \ref{thm:pro}.  
	  It is well known that \( {\cal B}_k(\R^n) \) is not a normable space.   Therefore, \( \,^\prime {\cal B}_k(\R^n) \) is not normable.  
	   If \( E \) is metrizable and \( (DF) \), then \( E \) is normable (see p. 169 of Grothendieck \cite{AG}).    
	  Since \( \,^\prime {\cal B}_k(\R^n) \) is  a \( (DF) \) space, it is not metrizable.    If a nuclear space is complete, then it is semi-reflexive, that is, the space coincides with its second dual as a set of elements.    
  
Therefore, \( \,^\prime {\cal B}_k(\R^n) \) is not nuclear.    Since all Montel spaces are reflexive, we know that \( \hat{\cal{B}}_k(\R^n) \) is not Montel.         
\end{proof}
  
\section{Independent Characterization} 
\label{sec:independent_characterization}
We can describe our topology $\tau({\cal P}_k,\B_k)$ on ${\cal P}_k$ in another non-constructive manner for \( U \) open in \( \R^n \), this time without reference to the space $\B$.

\begin{thm}[Characterization 3]
	The topology $\tau({\cal P}_k,\B_k)$ is the finest locally convex topology \( \mu \) on ${\cal P}_k$ such that
  \begin{enumerate}
		\item  bounded mappings $({\cal P}_k,\mu) \to F$ are continuous whenever \( F \) is locally convex; 
		\item $K^0= \{(p;\a)\in {\cal P}_k : \|\a\|=1\}$ is bounded in \( ({\cal P}_k,\mu) \), where $\|\a\|$ is the mass norm of $\a \in \L_k(\R^n)$;
		\item $P_v: ({\cal P}_k,\mu)\rightarrow\overline{({\cal P}_k,\mu)}$ given by $P_v(p;\a):=\lim_{t\rightarrow 0}(p+v;\a/t)-(p;\a/t)$ is well-defined and bounded for all vectors $v\in \R^n$.  
	\end{enumerate}
\end{thm}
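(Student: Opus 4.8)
The plan is to prove the identity by a Mackey--Arens sandwich. Write $\mu$ for the finest locally convex topology on ${\cal P}_k$ obeying (1)--(3); it exists, being the bornological topology attached to the smallest vector bornology on ${\cal P}_k$ for which $K^0$ and all the operators $P_v$ are bounded, and any topology obeying (1)--(3) is coarser than it. Setting $\t:=\t({\cal P}_k,\B_k)$, I would establish $\t=\mu$ through two inclusions: that $\t$ itself satisfies (1)--(3), which gives $\t\le\mu$; and that the continuous dual of $({\cal P}_k,\mu)$ is contained in $\B_k$, which gives $\mu\le\t$ because, by Theorem \ref{mackey}, $\t$ is the finest locally convex topology on ${\cal P}_k$ with dual $\B_k$, so Mackey--Arens \cite{robertson} applies to any topology compatible with the pairing $({\cal P}_k,\B_k)$.

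For $\t\le\mu$ I would check the three properties of $\t$ in turn. Property (2) is immediate from mass--comass duality: for $(p;\a)\in K^0$ one has $\|(p;\a)\|_{B^0}=\sup_{\o}\o(p;\a)/\|\o\|_{C^0}\le\|\a\|=1$, so $K^0$ lies in the unit ball of $\|\cdot\|_{B^0}$ and is $\t$-bounded. Property (3) follows from the fundamental estimate $\|P_vA\|_{B^{r+1}}\le C\|v\|\,\|A\|_{B^r}$ of \cite{OC}: it shows both that the defining limit converges in $\,^\prime\B_k=\overline{({\cal P}_k,\t)}$ and that $P_v$ extends to a bounded operator. For property (1), i.e. that $({\cal P}_k,\t)$ is bornological, I would identify $\t$ with the inductive limit $\varinjlim_r({\cal P}_k,\|\cdot\|_{B^r})$ of normed spaces. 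That inductive limit is finer than $\t$ and has continuous dual $\varprojlim_r\B_k^r=\B_k$, so by Mackey--Arens it is not strictly finer than the Mackey topology $\t$; hence the two coincide, and an inductive limit of normed spaces is bornological by K\"othe \cite{kothe}.

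For $\mu\le\t$, take $\phi\in({\cal P}_k,\mu)'$ and define a $k$-form $\o$ by $\o(p)(\a):=\phi(p;\a)$; the goal is $\o\in\B_k$, i.e. $\o\in C^\i$ with every $\|\cdot\|_{C^r}$ finite. Property (2) and continuity of $\phi$ at once give $\sup_p\|\o(p)\|<\i$. Because $\mu$ is bornological, the bounded operators $P_v$ of (3) are continuous, hence extend to the completion, so the compositions $P_{v_m}\circ\cdots\circ P_{v_1}$ are defined there; writing $\bar\phi$ for the continuous extension of $\phi$ and passing it through the defining limits yields $\bar\phi\bigl(P_{v_m}\cdots P_{v_1}(p;\a)\bigr)=D_{v_m}\cdots D_{v_1}\o(p)(\a)$. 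Boundedness of these iterated operators on the $\mu$-bounded set $K^0$ then forces $\sup_p\|D_{v_m}\cdots D_{v_1}\o(p)\|<\i$ for every $m$ and all directions, which is the statement that each $\|\o\|_{C^r}$ is finite; thus $\o\in\B_k$ and $({\cal P}_k,\mu)'\subseteq\B_k$. Since $\t\le\mu$ already gives $\B_k=({\cal P}_k,\t)'\subseteq({\cal P}_k,\mu)'$, we obtain $({\cal P}_k,\mu)'=\B_k$, and Mackey--Arens closes the argument.

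I expect the main obstacle to be the final regularity step of $\mu\le\t$: upgrading ``all iterated directional derivatives of $\o$ exist and are uniformly bounded'' to the genuine assertion $\o\in C^\i$ with finite $C^r$-seminorms, and justifying the iteration of the $P_v$ on the completion. Both hinge on the fundamental estimates for $P_v$ in \cite{OC}, which are exactly what convert order-$r$ boundedness of the $P_v$-iterates of $K^0$ into finiteness of $\|\o\|_{C^r}$; granting these, the real-analytic regularity of $\o$, and hence its membership in $\B_k$, follows.
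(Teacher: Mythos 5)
Your proof is correct and, in outline, matches the paper's: both directions are the same Mackey--Arens sandwich, and your computation \(\bar\phi\bigl(P_{v_m}\cdots P_{v_1}(p;\alpha)\bigr)=D_{v_m}\cdots D_{v_1}\omega(p)(\alpha)\), combined with boundedness of the iterated images of \(K^0\), is exactly the paper's argument that \((\mathcal{P}_k,\mu)'\subset\mathcal{B}_k\), followed by the same use of ``bornological \(\Rightarrow\) Mackey.'' The genuine difference is your treatment of property (1). The paper deduces bornologicity of \((\mathcal{P}_k,\tau)\) from Proposition \ref{laundry} together with the assertion that any subspace of a bornological space is bornological --- an assertion that is false in general (bornologicity passes to countable-codimensional subspaces, not to arbitrary ones, and the paper's own citation of Valdivia \cite{valdivia2} shows how badly the property can behave under natural operations). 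Your route --- identify \(\tau(\mathcal{P}_k,\mathcal{B}_k)\) with the inductive limit \(\varinjlim_r(\mathcal{P}_k,\|\cdot\|_{B^r})\), whose dual is \(\bigcap_r\mathcal{B}_k^r=\mathcal{B}_k\) by the representation theorem \((\hat{\mathcal{B}}_k^r)'\cong\mathcal{B}_k^r\) of \cite{harrison2}, then invoke that an inductive limit of normed spaces is bornological \cite{kothe} --- is sounder and is the right way to get (1). Note only that the ``finer'' half of that identification also needs justification (e.g.\ each norm topology, being metrizable, equals the Mackey topology \(\tau(\mathcal{P}_k,\mathcal{B}_k^r)\), which dominates \(\tau(\mathcal{P}_k,\mathcal{B}_k)\) by monotonicity of Mackey topologies in the dual), while the ``coarser'' half is your Mackey--Arens step; also note that \(\varinjlim_r(\mathcal{P}_k,\|\cdot\|_{B^r})\) is not formally the subspace topology induced from \(\hat{\mathcal{B}}_k\), so this is not Theorem \ref{mackey} restated but a separate fact you are proving.

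Two blemishes, neither fatal. First, your opening claim that a finest topology satisfying (1)--(3) exists a priori, as ``the bornological topology attached to the smallest bornology,'' is shaky: condition (3) refers to the completion \(\overline{(\mathcal{P}_k,\mu)}\), which varies with \(\mu\), so (1)--(3) is not a purely bornological condition, and suprema of topologies do not preserve (1). It is also unnecessary: run your second inclusion for an arbitrary \(\mu\) satisfying (1)--(3) and conclude via \(\mu=\tau\bigl(\mathcal{P}_k,(\mathcal{P}_k,\mu)'\bigr)\leq\tau(\mathcal{P}_k,\mathcal{B}_k)\), i.e.\ monotonicity again, rather than via the equality \((\mathcal{P}_k,\mu)'=\mathcal{B}_k\); that equality genuinely fails for non-maximal \(\mu\), as the paper's example of the \(\natural\)-norm topology shows, so it cannot be part of the argument for general \(\mu\). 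Second, the closing regularity step is elementary, not ``real-analytic'': once all iterated directional derivatives of \(\omega\) exist and are uniformly bounded, the mean value theorem makes each derivative of order \(r\) Lipschitz, hence continuous, so \(\omega\in C^\infty\) with every \(\|\omega\|_{C^r}\) finite; elements of \(\mathcal{B}_k\) are in no way analytic, and no estimate beyond this is needed.
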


\begin{proof}
	A l.c.s. \( E \) is bornological if and only if bounded mappings \( S:E \to F \) are continuous whenever \( F \) is locally convex.	 Any subspace of a bornological space is bornological.  So by proposition \ref{laundry}, \( \t \) satisfies (1).  Properties (2) and (3) are established in Harrison \cite{diffcomp, OC}.

  Now suppose \( \mu \) satisfies (1)-(3).    Suppose \( \o \in ({\cal P}_k, \mu)' \).  Then                                
	\begin{align*}
	 \o P_v (p;\a) &= \o ( \lim_{t \to 0} (p + tv;\a/t) - (p;\a/t)) = \lim_{t \to 0} \o((p + tv;\a/t) - (p;\a/t) ) \\&= \lim_{t \to 0} \o(p + tv;\a/t) - \o(p;\a/t) = L_v \o (p;\a),
	\end{align*}  where \( L_v \) is the Lie derivative of \( \o \). 
	Since \( \o  \) is continuous and \( K^0 \) is bounded, it follows that \( \o(K^0) \) is bounded in \( \R \). (see \cite{bourbaki} III.11 Proposition 1(iii)).   Similarly, \( K^r  = P_v(K^{r-1}) \) is bounded implies \( \o(K^r) \) is bounded.  It follows that  \( \o \in \B_k \).  Hence \( ({\cal P}_k, \mu)' \subset \B_k \).  Now \( \mu \) is Mackey since it is bornological.   Since \(  ({\cal P}_k, \mu)' \subset ({\cal P}_k, \t)' \) and \( \t \) is also Mackey, we know that   \( \t \) is finer than \( \mu \) by the Mackey-Arens theorem.

\end{proof}

\begin{example}
	Let \( \|A\|_\natural = \varinjlim_r \|A\|_{B^r} \).  This is a norm on pointed chains (Harrison \cite{diffcomp}).   The Banach space  \( ({\cal P}, \natural) \) satisfies (1)-(3).  The topology \( \natural \) is strictly coarser than \( \t \) since  \( ({\cal P}, t)' = {\cal B} \) and \( ({\cal P}, \natural)'  \) is the space of differential forms with a uniform bound on \emph{all} directional derivatives.  
\end{example}


\begin{thebibliography}{10}

\bibitem{robertson}
Alexander and Wendy Robertson.
\newblock {\em Topological Vector Spaces}.
\newblock Cambridge University Press, Cambridge, 1964.

\bibitem{bourbaki}
Nicolas Bourbaki.
\newblock {\em Elements of Mathematics: Topological Vector Spaces}.
\newblock Springer-Verlag, Berlin, 1981.

\bibitem{AG}
Alexander Grothendieck.
\newblock {\em Topological Vector Spaces}.
\newblock Gordon Breach, 1973.

\bibitem{diffcomp}
J.~Harrison.
\newblock Differential complexes and exterior calculus.

\bibitem{harrison1}
Jenny Harrison.
\newblock Stokes' theorem on nonsmooth chains.
\newblock {\em Bulletin of the American Mathematical Society}, 29:235--242,
  1993.

\bibitem{continuity}
Jenny Harrison.
\newblock Continuity of the integral as a function of the domain.
\newblock {\em Journal of Geometric Analysis}, 8(5):769--795, 1998.

\bibitem{harrison2}
Jenny Harrison.
\newblock Isomorphisms of differential forms and cochains.
\newblock {\em Journal of Geometric Analysis}, 8(5):797--807, 1998.

\bibitem{hodge}
Jenny Harrison.
\newblock Geometric {Hodge} star operator with applications to the theorems of
  {Gauss} and {Green}.
\newblock {\em Mathematical Proceedings of the Cambridge Philosophical
  Society}, 140(1):135--155, 2006.

\bibitem{poincarelemma}
Jenny Harrison.
\newblock Geometric {Poincare Lemma}.
\newblock submitted, March 2010.

\bibitem{OC}
Jenny Harrison.
\newblock Operator calculus -- the exterior differential complex.
\newblock submitted, March 2010.

\bibitem{kothe}
Gottfried K{\"{o}}the.
\newblock {\em Topological Vector Spaces}, volume~I.
\newblock Springer-Verlag, Berlin, 1966.

\bibitem{thesis}
Harrison Pugh.
\newblock Applications of differential chains to complex analysis and dynamics.
\newblock {\em Harvard senior thesis}, 2009.

\bibitem{scheafer}
Helmut Schaefer.
\newblock {\em Topological Vector Spaces}.
\newblock McMillan Company, 1999.

\bibitem{schwartz1}
Laurent Schwartz.
\newblock {\em Th{\'{e}}orie des Distributions, Tome II}.
\newblock Hermann, Paris, 1954.

\bibitem{valdivia2}
Manuel Valdivia.
\newblock On the completion of a bornological space.
\newblock {\em Arch. Math. Basel}, 29:608--613, 1977.

\end{thebibliography}

\end{document}